\documentclass[ctagsplt,12pt]{amsart}
\textwidth=32cc \baselineskip=16pt \openup 1.5\jot
\ExecuteOptions{newlfont}\RequirePackage{newlfont}
\usepackage{amscd}
\usepackage{amsmath}
\usepackage{amsfonts}
\usepackage{amssymb}
\usepackage{enumerate}
\usepackage{graphicx}
\usepackage{latexsym}
\usepackage{color}

\numberwithin{equation}{section}

\begin{document}
	
	\title[Modular Uniform Convexity]{Modular uniform convexity structures and applications to boundary value problems with non-standard growth}
	\author[ M. A. Khamsi \& O. M\'{e}ndez]{Mohamed  A. Khamsi \&  Osvaldo M\'{e}ndez}

	\address{Mohamed A. Khamsi\\ Department of Applied Mathematics and Sciences, Khalifa University, Abu Dhabi, UAE.}
	\email{mohamed.khamsi@ku.ac.ae}
	\address{Osvaldo M\'{e}ndez\\Department of Mathematical Sciences, The University of Texas at El Paso, El Paso, TX 79968, U.S.A.}
	\email{osmendez@utep.edu}
	
	\subjclass[2010]{Primary 47H09, Secondary 46B20, 47H10, 47E10}
	\keywords{Dirichlet problem, fixed point theorem, modular uniform convexity, modular vector spaces, Nakano spaces, Sobolev spaces, uniform convexity, variable exponent spaces. }
	\begin{abstract}
		We establish the existence and uniqueness of the solution to the Dirichlet problem for the variable exponent $p$-Laplacian on a bounded, smooth domain $\Omega \subset {\mathbb R}^n$, where the boundary datum belongs to $W^{1,p}(\Omega)$. Our analysis considers a continuous and bounded exponent $p$ satisfying $1<\inf\limits_{x\in \Omega}p(x)$ and $\sup\limits_{x\in \Omega}p(x)<\infty $, and is based on the uniform convexity of the Dirichlet integral, which is highly non trivial and in the variable exponent case is not related to the uniform convexity of the Sobolev norm.

	\end{abstract}
	\maketitle
	
	\newtheorem{theorem}{Theorem}[section]
	\newtheorem{acknowledgement}{Acknowledgement}
	\newtheorem{algorithm}{Algorithm}
	\newtheorem{axiom}{Axiom}[section]
	\newtheorem{case}{Case}
	\newtheorem{claim}{Claim}
	\newtheorem{conclusion}{Conclusion}
	\newtheorem{condition}{Condition}
	\newtheorem{conjecture}{Conjecture}
	\newtheorem{corollary}{Corollary}[section]
	\newtheorem{criterion}{Criterion}
	\newtheorem{definition}{Definition}[section]
	\newtheorem{example}{Example}[section]
	\newtheorem{exercise}{Exercise}
	\newtheorem{lemma}{Lemma}[section]
	\newtheorem{notation}{Notation}
	\newtheorem{problem}{Problem}
	\newtheorem{proposition}{Proposition}[section]
	\newtheorem{remark}{Remark}[section]
	\newtheorem{solution}{Solution}
	\newtheorem{summary}{Summary}

	\thispagestyle{empty}
	\section{Introduction}
	We prove the solvability of the non-homogeneous Dirichlet problem for the variable exponent $p(x)$-Laplacian, with boundary datum in the Sobolev space $W^{1,p(\cdot)}(\Omega)$. In the sequel, for notational simplicity and without further notice, a variable exponent $p(x)$ will simply be denoted by $p$; in particular in the notation $W^{1,p}(\Omega)$ it will be understood that the exponent $p$ is variable, unless specifically indicated otherwise. We refer the reader to Definition \ref{ves-continuous} for the specifics. The domain $\Omega \subset \mathbb{R}^n$ is assumed to be bounded and regular.
	Though variable exponent spaces were first introduced in the early 1930's, the theory received fresh impetus when it was discovered that such spaces are the natural habitat of the solutions of differential equations with non-standard growth. As a reference starting point (notwithstanding the existence of prior works) we mention the boundary value problems introduced in \cite{KR}.\\
	A significant advance in this direction was obtained in \cite{FZ} (see also \cite{CF}): the authors succeeded in proving the existence and uniqueness of the solution of the homogeneous Dirichlet problem for the variable exponent $p(x)$-Laplacian, namely
	\begin{equation}\label{DP}
		\begin{cases}
			\Delta_p(u)=\text{div}\left(|\nabla u|^{p-2}\nabla u\right)=f\,\,\,\text{in}\,\,\, \Omega\\
			u|_{\partial \Omega}= 0,
		\end{cases}
	\end{equation}
	where $\Omega$ is a bounded, regular domain, the exponent $p=p(x)$ is bounded away from $1$ and $\infty$ and $f$ is a suitable Carath\'{e}odory function.
	Since then, a diverse variety of $p(x)$-Laplacian type boundary value problems with zero boundary data were considered and solved using powerful nonlinear techniques.Though it is impossible to provide a complete list of the literature, we wish to highlight a few works in this direction\\
	The problem
	\begin{equation*}
		\begin{cases}
			-\Delta_p(u)+a(x)|u|^{p-2}u=f(x,u)\,\,\,\text{in}\,\,\, \Omega\\
			u|_{\partial \Omega}= 0
		\end{cases}
	\end{equation*}
	was studied in \cite{CF}.
	The general homogeneous boundary value problem
	\begin{equation*}
		\begin{cases}
		\sum\limits_iD_i\left(a_i(x,u)|D_iu|^{p-2}D_iu\right)=f(x,u)\,\,\,\text{in}\,\,\, \Omega\\
			u|_{\partial \Omega}= 0
		\end{cases}
	\end{equation*}
	was solved in \cite{AS}.
	In \cite{AC} the mixed problem
	\begin{equation*}
	\begin{cases}
			-\Delta_p(u)+c(x,u)|u|^{\sigma -2}u=f\,\,\,\text{in}\,\,\, \Omega\\
		u|_{\Gamma_0}= 0,\\
		\left(\partial u + b(s,u)|u|^{\gamma-2}u\right)|_{\Gamma 1}=g
	\end{cases}
	\end{equation*}
is considered, where the boundary of $\Omega$ is the disjoint union of $\Gamma_0$ and $\Gamma_1$, $\partial$ stands for an oblique derivative, $f$ is a Carath\'{e}odory function and $c,\sigma, \gamma$ are subject to additional conditions (we refer the reader to the article for the specifics). It is worth underlying the fact that some control is required here on the modulus of continuity of the variable exponent. Notice that the Dirichlet condition in this problem is homogeneous. \\
In a similar spirit, the boundary value problem with homogeneous boundary data 
	\begin{equation*}
	\begin{cases}
		-\Delta_p(u)=a(x)|u|^{r-2}u+f(x,u)\,\,\,\text{in}\,\,\, \Omega\\
		u|_{\partial \Omega}= 0,
	\end{cases}
\end{equation*}
is studied in \cite{YW}; here $p$ and $r$ are variable exponents. Under suitable conditions, a sequence of nontrivial weak solutions exists for this problem.\\
Later, in \cite{SU} entropy solutions are shown to exist for the homogeneous boundary value problem
	\begin{equation*}
	\begin{cases}
		\text{div}\left(a(x,\nabla u)\right)=f\,\,\,\text{in}\,\,\, \Omega\\
		u|_{\partial \Omega}= 0,
	\end{cases}
\end{equation*}
under suitable nonstandard conditions on $a(x,z)$.\\Related homogeneous Dirichlet problems in variable exponent Sobolev spaces were studied in \cite{MR1,MR2}.
Systems of differential equations with nonstandard growth have also been considered, for example in  \cite{Z1} (see also \cite{H}), where the homogeneous Dirichlet problem (here $p=p(x)$)
	\begin{equation*}
	\begin{cases}
		-\Delta_p(u)=\lambda f(x,v)\,\,\,\text{in}\,\,\, \Omega\\
		-\Delta_p(v)=\lambda g(x,u)\,\,\,\text{in}\,\,\, \Omega\\
		u|_{\partial \Omega}=v|_{\partial \Omega}= 0,
	\end{cases}
\end{equation*}
is shown to have positive solutions, obviously with some assumptions on the parameters. Parabolic versions have also been targeted, see for example \cite{ZZ}. More general systems with homogeneous Dirichlet boundary conditions were considered in \cite{AS}. In all cases, only zero boundary conditions are considered.\\
The statement of a Dirichlet problem with non-homogeneous boundary condition
\begin{equation}\label{DPNH}
	\begin{cases}
		\Delta_p(u)=\text{div}\left(|\nabla u|^{p-2}\nabla u\right)=0\,\,\,\text{in}\,\,\, \Omega\\
		u|_{\partial \Omega}= g,
	\end{cases}
\end{equation}
calls for the clear definition of the sense in which the boundary condition is to be interpreted. If the boundary data $g$ is continuous, then the equality can be thought of pointwise. In applications, however, continuity is rarely found and more general boundary data are desirable. For example, if $g$ lies in some Lebesgue space the equality is to be understood in the sense of non-tangential convergence \cite{DaKe}. If $g\in W^{1,p}(\Omega)$, then a weak solution $w$ to the differential equation in problem (\ref{DPNH}) is said satisfy the boundary condition if $w-g\in W^{1,p}_0(\Omega)$.\\
If $p$ is constant in $\Omega$ and $g\in W^{1,p}(\Omega)$, a possible approach to make sense of the boundary condition \cite{BR} is to exploit the uniform convexity of the Sobolev norm $u\rightarrow \||\nabla u|\|_{L^p}$ to find a minimizer $u_0\in W_0^{1,p}(\Omega)$ of the Dirichlet energy functional
\begin{equation}\label{dii}
W^{1,p}_0(\Omega)\ni u\rightarrow \int\limits_{\Omega}|\nabla (u-g)|^pdx
\end{equation}
and to observe that the Fr\'{e}chet derivative of the functional (\ref{dii}) is (up to a constant) the $p$-Laplacian. Then it is clear that $g-u_0$ is the desired solution to problem (\ref{DPNH}).\\
This arguments fails when $p$ is allowed to vary within $\Omega$, for in such case, the integral (\ref{dii}) cannot be expressed in simple terms of the norm (which is known to be uniformly convex if the exponent $p$ is bounded, \cite{Luk})
\vspace{.2in}

In this work, we circumvent this difficulty by utilizing some delicate inequalities (that are new in the literature, to the best of our knowledge) satisfied by the convex modular defined by the functional (\ref{dii}). Our central result is Theorem \ref{main} in which we solve a generalization of problem (\ref{DPNH}).\\

Boundary value problems such as ()\ref{DPNH}) and its homogeneous counterparts constitute the mathematical foundation for the modeling of electrorheological fluids \cite{AM1,AM2, ve-book, RaRu, Ru}. Electrorheological fluids change rapidly and dramatically their viscosity in the presence of a magnetic field; their emerging applications include medicine, civil engineering, military science, among others \cite{BV,ChL,CKKPC,SYCS}.

\vspace{.2in}
	 The structure of the paper is as follows: Section \ref{inequalities} presents a collection of both known and novel fundamental inequalities of Clarkson type. These inequalities play a vital role in the subsequent theory development. Section \ref{modularvectorspaces} provides a concise overview of the modular vector spaces theory. In Section \ref{dirichletintegral} the inequalities established in Section 2 are utilized to handle the minimization of a suitable Dirichlet integral, which in turn is employed in Section \ref{applications} to investigate the solvability of the Dirichlet problem for variable-exponent $p$-Laplacian for non-homogeneous boundary values in $W^{1,p}(\Omega)$.
	
	\section{Auxiliary inequalities}\label{inequalities}
	The subsequent inequalities, inspired by the work of \cite{clarkson}, form the cornerstone of investigating the geometric characteristics of the classical $\ell^p$ and $L^p$ Banach spaces.  These inequalities are poised to assume a pivotal role in the ensuing analysis.
	
	\begin{lemma}\label{p-inequalities}
		For $a,b\in {\mathbb R}$, $|a|+|b|\neq 0$, $1\leq p\leq 2$ (\cite{sundaresan}):
		\begin{equation}\label{scalar-1<p<2} 
			\left|\frac{a+b}{2}\right|^p+\frac{p(p-1)}{2^{p+1}}\frac{|a-b|^2}{(|a|+|b|)^{2-p}}\leq \frac{1}{2}(|a|^p+|b|p).
		\end{equation}
		In addition, if $p\geq 2$ it holds (\cite{clarkson}):
		
		\begin{equation}\label{scalar-p>2}
			\left|\frac{a+b}{2}\right|^p+\left|\frac{a-b}{2}\right|^p\leq \frac{1}{2}(|a|^p+|b|^p).
		\end{equation}
	\end{lemma}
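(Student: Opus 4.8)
The plan is to prove the two inequalities separately, since they correspond to the two classical regimes of Clarkson-type estimates. For the case $p \geq 2$, inequality \eqref{scalar-p>2} is the easier one: I would first reduce to a one-variable problem by homogeneity. Assuming without loss of generality that $|a| + |b| = 1$ (or alternatively normalizing so that one of the variables equals $1$), the inequality becomes a statement about a single real parameter, and one can invoke the convexity of $t \mapsto |t|^p$ on $\mathbb{R}$ for $p \geq 2$. The cleanest route is to use the elementary fact that for $p \geq 2$ and $s, t \geq 0$ one has $s^{p/2} + t^{p/2} \leq (s + t)^{p/2}$ (superadditivity of $r \mapsto r^{p/2}$ when $p/2 \geq 1$), applied to $s = \left(\frac{a+b}{2}\right)^2$ and $t = \left(\frac{a-b}{2}\right)^2$, combined with the parallelogram-type identity $\left(\frac{a+b}{2}\right)^2 + \left(\frac{a-b}{2}\right)^2 = \frac{a^2 + b^2}{2}$. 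This gives $\left|\frac{a+b}{2}\right|^p + \left|\frac{a-b}{2}\right|^p \leq \left(\frac{a^2+b^2}{2}\right)^{p/2} \leq \frac{|a|^p + |b|^p}{2}$, where the last step is the power-mean inequality (convexity of $r \mapsto r^{p/2}$). This is a self-contained and short argument.

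For the case $1 \leq p \leq 2$, inequality \eqref{scalar-1<p<2} is considerably more delicate, and here I expect the main obstacle to lie. By homogeneity I would again normalize, say $|a| + |b| = 1$, and by symmetry assume $a \geq b$ and $a \geq 0$; writing $a = \frac{1+t}{2}$-type substitutions, or more directly setting $x = \frac{a-b}{|a|+|b|} \in [-1,1]$ and examining the function $\varphi(x) = \frac{1}{2}(|a|^p + |b|^p) - \left|\frac{a+b}{2}\right|^p - \frac{p(p-1)}{2^{p+1}} |a-b|^2$. The strategy is to show $\varphi \geq 0$ on the normalized domain by analyzing its behavior at the endpoints and its second-order Taylor expansion: the coefficient $\frac{p(p-1)}{2^{p+1}}$ is precisely chosen so that the quadratic remainder in the Taylor expansion of $t \mapsto |t|^p$ around the midpoint is captured sharply. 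Concretely, one expects to use the integral form of the remainder, $|a|^p + |b|^p - 2\left|\frac{a+b}{2}\right|^p = \frac{p(p-1)}{4}\int_{-1}^{1} (1 - |s|)\, |m + s \cdot \tfrac{a-b}{2}|^{p-2} \, ds \cdot (a-b)^2$ (up to constants, with $m = \frac{a+b}{2}$), and then bound $|m + s \tfrac{a-b}{2}|^{p-2}$ from below. Since $p - 2 \leq 0$, the function $u \mapsto u^{p-2}$ is decreasing, so one needs an upper bound on $|m + s\tfrac{a-b}{2}|$, which is at most $\frac{|a|+|b|}{2}$... but this goes the wrong way; the correct approach bounds things so that the $(|a|+|b|)^{2-p}$ denominator appears, using $\max(|a|,|b|) \leq |a|+|b|$. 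I would carefully track which convexity direction is needed and most likely cite \cite{sundaresan} for the sharp constant, presenting the reduction to the scalar estimate and the key substitution explicitly while deferring the delicate one-variable optimization.

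The overall structure, then, is: (1) state the homogeneity reduction that lets us assume $|a| + |b| = 1$; (2) dispatch \eqref{scalar-p>2} via the two power-mean inequalities through the intermediate quantity $\left(\frac{a^2+b^2}{2}\right)^{p/2}$; (3) for \eqref{scalar-1<p<2}, set up the normalized scalar function, reduce to $b \geq 0$ or a sign-split, and establish nonnegativity either by the Taylor/integral-remainder computation with the decreasing weight $u^{p-2}$ or by direct reference to the known sharp inequality of Sundaresan. The genuinely hard part is verifying that the constant $\frac{p(p-1)}{2^{p+1}}$ is admissible uniformly in $p \in [1,2]$ and across the full range of $a, b$ — in particular handling the near-degenerate configurations where $a \approx b$ (where both sides vanish to second order and the constant is sharp) and where $b \approx -a$ (where the denominator $(|a|+|b|)^{2-p}$ interacts with the smallness of $|a+b|$). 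Since this sharp scalar inequality is attributed in the statement to \cite{sundaresan}, I would lean on that reference for the most technical endpoint analysis rather than reproving it in full.
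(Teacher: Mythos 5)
The paper offers no proof of this lemma at all: both inequalities are stated with citations only (\cite{sundaresan} for $1\le p\le 2$, \cite{clarkson} for $p\ge 2$), so any self-contained argument you give goes beyond what the paper supplies. Your proof of \eqref{scalar-p>2} is complete and correct: the chain
\[
\left|\tfrac{a+b}{2}\right|^p+\left|\tfrac{a-b}{2}\right|^p \le \Bigl(\bigl(\tfrac{a+b}{2}\bigr)^2+\bigl(\tfrac{a-b}{2}\bigr)^2\Bigr)^{p/2}=\Bigl(\tfrac{a^2+b^2}{2}\Bigr)^{p/2}\le \tfrac12\bigl(|a|^p+|b|^p\bigr),
\]
using superadditivity of $r\mapsto r^{p/2}$ and the power-mean inequality, is the standard elementary argument and needs no normalization step.

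For \eqref{scalar-1<p<2}, however, your sketch stalls exactly where you say it does, and the repair you propose does not work: writing $m=\tfrac{a+b}{2}$, $h=\tfrac{a-b}{2}$, the Taylor remainder identity reduces the claim to
\[
\int_0^1(1-s)\bigl[|m+sh|^{p-2}+|m-sh|^{p-2}\bigr]\,ds\;\ge\;\Bigl(\tfrac{|a|+|b|}{2}\Bigr)^{p-2},
\]
and the bound $|m\pm sh|\le |a|+|b|$ you suggest only yields $(|a|+|b|)^{p-2}$ on the left, which misses the target by the factor $2^{2-p}\ge 1$. The missing idea is to exploit convexity of $t\mapsto t^{p-2}$ on $(0,\infty)$ (valid since $p-2<0$) together with the observation that $m+sh=(1-s)m+sa$ and $m-sh=(1-s)m+sb$, whence $|m+sh|+|m-sh|\le 2(1-s)|m|+s(|a|+|b|)\le |a|+|b|$; this gives
\[
\tfrac12\bigl(|m+sh|^{p-2}+|m-sh|^{p-2}\bigr)\ \ge\ \Bigl(\tfrac{|m+sh|+|m-sh|}{2}\Bigr)^{p-2}\ \ge\ \Bigl(\tfrac{|a|+|b|}{2}\Bigr)^{p-2},
\]
and integrating against $(1-s)$ reproduces the constant $\frac{p(p-1)}{2^{p+1}}$ exactly (it is sharp at $p=2$, where the inequality degenerates to the parallelogram identity). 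Falling back on \cite{sundaresan} for this half is legitimate --- it is precisely what the paper does --- but as written your argument for $1<p\le 2$ is a plan with an acknowledged hole rather than a proof. (Incidentally, the case $p=1$ is trivial since the second term vanishes, and $|b|p$ in \eqref{scalar-1<p<2} is a typo for $|b|^p$.)
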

	
	\medskip
	The theory presented in this study necessitates the formulation of vector counterparts to the scalar inequalities mentioned in Lemma \ref{p-inequalities}. Specifically, we seek to establish vector inequalities analogous to (\ref{scalar-1<p<2}) and (\ref{scalar-p>2}), wherein the scalars $a$ and $b$ are substituted with vectors and the absolute values are replaced by a vector space norm $\|\cdot\|$. To achieve the vector-valued version of Lemma \ref{p-inequalities}, we initiate the process with the next technical result, which affirms the validity of these inequalities for complex numbers.\\

	\begin{lemma}\label{complex}
		For $1<p\leq 2$, $z_1\in {\mathbb C}$, $z_2\in {\mathbb C}$, $|z_1|^2+|z_2|^2\neq 0$, it holds
		\begin{equation}\label{complex-1<p<2}
			\left|\frac{z_1+z_2}{2}\right|^{p}+\frac{p(p-1)}{2^{p+1}}\frac{|z_1-z_2|^2}{\left(|z_1|^2+|z_2|^2\right)^{\frac{2-p}{2}}}\leq \frac{1}{2}(|z_1|^p+|z_2|^p).\end{equation}
		In addition, if $p\geq 2$, one has, for any two complex numbers $z_1$ and $z_2$,
		\begin{equation}\label{complex-p>2}
			\left|\frac{z_1+z_2}{2}\right|^{p}+\left|\frac{z_1-z_2}{2}\right|^p\leq \frac{1}{2}(|z_1|^p+|z_2|^p).
		\end{equation}
	\end{lemma}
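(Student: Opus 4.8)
The plan is to deduce each of the two inequalities from the corresponding scalar estimate in Lemma~\ref{p-inequalities}, using a different device in each range of $p$. For $p\geq 2$, inequality~(\ref{complex-p>2}) follows at once from the parallelogram identity together with two elementary properties of $t\mapsto t^{p/2}$ on $[0,\infty)$, both available because $p/2\geq 1$: superadditivity, $s^{p/2}+t^{p/2}\leq(s+t)^{p/2}$, and convexity, $(s+t)^{p/2}\leq 2^{p/2-1}\bigl(s^{p/2}+t^{p/2}\bigr)$. Applying the first with $s=|z_1+z_2|^2$, $t=|z_1-z_2|^2$ and using $s+t=2(|z_1|^2+|z_2|^2)$ gives $|z_1+z_2|^p+|z_1-z_2|^p\leq 2^{p/2}(|z_1|^2+|z_2|^2)^{p/2}$; applying the second with $s=|z_1|^2$, $t=|z_2|^2$ bounds the right-hand side by $2^{p-1}(|z_1|^p+|z_2|^p)$. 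Dividing by $2^p$ yields~(\ref{complex-p>2}). (One could also apply~(\ref{scalar-p>2}) to the real and imaginary parts of $z_1,z_2$ separately and add, but the argument above is cleaner.)

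For $1<p\leq 2$ the plan is to average~(\ref{scalar-1<p<2}) over rotations. Since $\mathrm{Re}(e^{i\theta}z)=|z|\cos(\theta+\arg z)$, one has $|z|^p=\kappa_p^{-1}\int_0^{2\pi}\bigl|\mathrm{Re}(e^{i\theta}z)\bigr|^p\,d\theta$ for every $z\in\mathbb{C}$, where $\kappa_p:=\int_0^{2\pi}|\cos\theta|^p\,d\theta$. Fixing $\theta$ and applying~(\ref{scalar-1<p<2}) to the real numbers $a=\mathrm{Re}(e^{i\theta}z_1)$ and $b=\mathrm{Re}(e^{i\theta}z_2)$---for which $\tfrac{a+b}{2}=\mathrm{Re}\bigl(e^{i\theta}\tfrac{z_1+z_2}{2}\bigr)$ and $a-b=\mathrm{Re}\bigl(e^{i\theta}(z_1-z_2)\bigr)$---then integrating in $\theta$ and using the identity above on the three $p$-power terms reduces~(\ref{complex-1<p<2}) to a single lower bound:
\[
\int_0^{2\pi}\frac{\bigl|\mathrm{Re}(e^{i\theta}(z_1-z_2))\bigr|^{2}}{\bigl(|\mathrm{Re}(e^{i\theta}z_1)|+|\mathrm{Re}(e^{i\theta}z_2)|\bigr)^{2-p}}\,d\theta\;\geq\;\kappa_p\,\frac{|z_1-z_2|^{2}}{\bigl(|z_1|^{2}+|z_2|^{2}\bigr)^{(2-p)/2}}.
\]
Estimating the denominator crudely by $|\mathrm{Re}(e^{i\theta}z_i)|\leq|z_i|$ and computing $\int_0^{2\pi}|\mathrm{Re}(e^{i\theta}w)|^2\,d\theta=\pi|w|^2$ already produces an inequality of exactly this shape; I expect the delicate point---and the main obstacle---to be pinning down the sharp constant $\kappa_p$, which forces one to exploit that the rotated denominator lies well below its maximum for most values of $\theta$.

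A second route for $1<p\leq 2$, bypassing the integral, starts from homogeneity: normalize $|z_1|^2+|z_2|^2=1$ and note that every term of~(\ref{complex-1<p<2}) is a function of $\sigma:=|z_1|^2-|z_2|^2$ and $\beta:=2\,\mathrm{Re}(z_1\bar z_2)$ alone, constrained by $\sigma^2+\beta^2\leq 1$, through $|z_1\pm z_2|^2=1\pm\beta$ and $2|z_i|^2=1\pm\sigma$. Only the right-hand side depends on $\sigma$, and since $(\tfrac{1+\sigma}{2})^{p/2}+(\tfrac{1-\sigma}{2})^{p/2}$ is even and concave in $\sigma$ it is smallest---for $\beta$ fixed---at $|\sigma|=\sqrt{1-\beta^2}$, so~(\ref{complex-1<p<2}) collapses to a single-variable inequality in $\beta\in[-1,1]$. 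Whichever route one follows, the crux is a sharp scalar estimate at which equality is attained (the configuration $z_1=z_2$), so no soft argument can close it; a careful analysis near that extremal configuration is where the real work---and the main difficulty---lies.
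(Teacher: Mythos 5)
Your argument for $p\geq 2$ is complete and correct: the chain $|z_1+z_2|^p+|z_1-z_2|^p\leq\left(|z_1+z_2|^2+|z_1-z_2|^2\right)^{p/2}=2^{p/2}\left(|z_1|^2+|z_2|^2\right)^{p/2}\leq 2^{p-1}\left(|z_1|^p+|z_2|^p\right)$ is the classical derivation of (\ref{complex-p>2}), and it is more self-contained than the paper, which omits this case with the remark that ``the same arguments'' apply. (Your parenthetical alternative---applying (\ref{scalar-p>2}) to real and imaginary parts and adding---does not work, since $|z|^p\neq|\mathrm{Re}\,z|^p+|\mathrm{Im}\,z|^p$; but you do not rely on it.)

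For $1<p\leq 2$ you do not give a proof: both of your routes stop exactly at the single-variable estimate that you yourself call ``the real work,'' and that gap cannot be closed, because inequality (\ref{complex-1<p<2}) is false for $1<p<2$. Your Route B correctly locates the extremal configuration ($z_1,z_2$ real multiples of one another, $z_1$ near $z_2$), and it is precisely there that the claim fails: since $\left(|z_1|^2+|z_2|^2\right)^{(2-p)/2}<\left(|z_1|+|z_2|\right)^{2-p}$, the correction term in (\ref{complex-1<p<2}) is strictly larger than the one in (\ref{scalar-1<p<2}), which is already second-order sharp at $a=b$. Concretely, with $z_1=1$ and $z_2=1+\varepsilon$ the left side of (\ref{complex-1<p<2}) minus the right side equals $p(p-1)\left(\tfrac18+2^{-p/2-2}-\tfrac14\right)\varepsilon^2+O(\varepsilon^3)$, and $\tfrac18+2^{-p/2-2}>\tfrac14$ whenever $p<2$; for instance, $p=3/2$, $z_1=1$, $z_2=1.1$ gives left side $\approx 1.07702$ against right side $\approx 1.07684$. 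The paper's own proof founders at the same point: after reducing to $\theta=0$ it invokes (\ref{scalar-1<p<2}), which only controls the correction term with denominator $(1+r)^{2-p}$, not the larger one in $F(0)$ with $(1+r^2)^{(2-p)/2}$. The statement that is both true and sufficient for Theorem \ref{vector-p-inequalities} has $\left(|z_1|+|z_2|\right)^{2-p}$ in place of $\left(|z_1|^2+|z_2|^2\right)^{(2-p)/2}$; for that version your Route B does close, since the worst case remains the collinear one (the right side decreases and the correction term increases in $|\sigma|$), where after a rotation $z_1,z_2$ are real and the inequality is exactly (\ref{scalar-1<p<2}).
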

	\begin{proof}
		Let us first focus on the case $1 < p \leq 2$.  Before, we prove the inequality (\ref{complex-1<p<2}), we will need the following estimate
		\begin{equation}\label{bound-estimate}
			0 \leq (p-1)^{\frac{2}{2-p}}< e^{-2},
		\end{equation}
		for $1\leq p< 2$.   Set $\displaystyle g(p) = (p-1)^{\frac{2}{2-p}}$.  It is easy to show that $g(1)=0$ and that $g(p)\rightarrow e^{-2}$ when $p\rightarrow 2^-.$ Also, $$g^{\prime}(p)=2(p-1)^{\frac{p}{2-p}}(2-p)^{-2}\left(2-p+(p-1)\ln{(p-1)}\right).$$
		Writing $h(p)=2-p+(p-1)\ln{(p-1)}$ it follows that $h(1)=1$, $h(2)=0$ and $h^{\prime}(p)=\ln{(p-1)}<0.$ Thus $h(p)>0$ on $[1,2)$, and hence $g^{\prime}(p)>0$ which gives the estimate.  By setting $w=z_2z_1^{-1}=re^{i\theta}$, $-\pi<\theta\leq \pi$, $r>0$, it is easy to rewrite the target inequality (\ref{complex-1<p<2}) as
		\begin{equation}\label{rewritten} \left|\frac{1+re^{i\theta}}{2}\right|^{p}+\frac{p(p-1)}{2^{p+1}}\frac{|1-re^{i\theta}|^2}{\left(1+r^2\right)^{\frac{2-p}{2}}}\leq \frac{1}{2}(1+r^p).
		\end{equation}
		Fix $r$ and denote the left-hand side by $F(\theta)$, i.e.
		\begin{equation*}
			F(\theta)=\frac{1}{2^p}\left( 1 + 2r\cos{\theta}+r^2\right)^{\frac{p}{2}}+\frac{p(p-1)}{2^{p+1}(1+r^{2})^{\frac{2-p}{2}}} \left(1-2r\cos{\theta}+r^2\right).
		\end{equation*}
		We have
		\begin{equation*}
			F^{\prime}(\theta)=\frac{2pr\sin{\theta}}{2^{p+1}} \left(-(1+2r\cos{\theta}+r^2)^{\frac{p}{2}-1}+\frac{p-1}{(1+r^2)^{\frac{2-p}{2}}}\right).
		\end{equation*}
		It is readily seen that for $\displaystyle -\frac{\pi}{2}\leq \theta\leq\frac{\pi}{2}$, it holds
		\begin{align}\nonumber
			-(1+r^2)^{\frac{2-p}{2}}+(p-1)(1+2r\cos{\theta}+r^2)^{\frac{2-p}{2}}&\leq (p-1)(1+r)^{2-p}-(1+r^2)^{\frac{2-p}{2}}.
		\end{align}
		We claim that
		\begin{equation*}
			(p-1)(1+r)^{2-p}-(1+r^2)^{\frac{2-p}{2}}\leq 0.
		\end{equation*}
		Indeed, it is enough to show that
		\begin{equation*}
			(p-1)^{\frac{2}{p-2}}\leq \frac{1+r^2}{(1+r)^2}.
		\end{equation*}
		This follows directly from the estimate (\ref{bound-estimate}) and the fact that $e^2>2$.  Therefore, $F$ increases on $\displaystyle \left(-\frac{\pi}{2},0\right)$ and decreases on $\displaystyle \left(0, \frac{\pi}{2}\right)$, i.e., on $\displaystyle \left[-\frac{\pi}{2},\frac{\pi}{2}\right]$ one has
		\begin{align}\label{xx}
			F(\theta)\leq F(0)=\left|\frac{1+r}{2}\right|^{p}+\frac{p(p-1)}{2^{p+1}}
			\frac{|1-r|^2}{\left(1+r^2\right)^{\frac{2-p}{2}}}.
		\end{align}
		On the other hand, on $\displaystyle \left(-\pi,-\frac{\pi}{2}\right)\cup \left(\frac{\pi}{2} \pi\right]$, one has $1+2r\cos{\theta}+r^2< 1+r^2$. Consequently,
		\begin{equation*}
			(1+2r\cos{\theta}+r^2)^{\frac{p-2}{2}} > (1+r^2)^{\frac{p-2}{2}}\geq(p-1)(1+r^2)^{\frac{p-2}{2}}.
		\end{equation*}
		Thus, $F(\theta)$ increases on $\displaystyle \left(-\pi,-\frac{\pi}{2}\right)$ and decreases on $\displaystyle \left(\frac{\pi}{2}, \pi\right)$ and the bound in (\ref{xx}) holds on $(-\pi, \pi]$.  On account of inequality (\ref{scalar-1<p<2}), $F(0)$ is bounded above by the right-hand side of inequality (\ref{rewritten}), and this observation proves the desired inequality.\\
		The proof of (\ref{complex-p>2}), for $p > 2$, follows by the same arguments and will be omitted.
	\end{proof}
	
	With the assistance of the aforementioned lemma,we now tackle the vector form of the inequalities derived from Lemma \ref{p-inequalities}, within any Hilbert space.\\
	
	\begin{theorem} \label{vector-p-inequalities}  Let ${\mathbf u}$, ${\mathbf v}$ be vectors in a Hilbert space $({\mathbb H}, \|\cdot\|)$.  If $1\leq p\leq 2$ it holds
		\begin{equation}\label{1<p<2-vector}
			\left\|\frac{\mathbf{u} + \mathbf{v}}{2}\right\|^p + \frac{p(p-1)}{2^{p+1}}\frac{\|\mathbf{u}-\mathbf{v}\|^2}{(\|\mathbf{u}\|+\|\mathbf{v}\|)^{2-p}} \leq \frac{1}{2}(\|\mathbf{u}\|^p+\|\mathbf{v}\|^p),
		\end{equation}
		provided $\|\mathbf u\| + \|\mathbf v\| \neq 0$.  In addition, if $p\geq 2$ it holds
		\begin{equation}\label{p>2-vector}
			\left\|\frac{\mathbf{u} + \mathbf{v}}{2}\right\|^p + \left\|\frac{\mathbf{u} - \mathbf{v}}{2}\right\|^p \leq \frac{1}{2}(\|\mathbf{u}\|^p+\|\mathbf{v}\|^p).
		\end{equation}
	\end{theorem}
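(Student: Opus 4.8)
The plan is to reduce the two vector inequalities to their complex--scalar counterparts in Lemma \ref{complex}. The key point is that every quantity occurring in (\ref{1<p<2-vector}) and (\ref{p>2-vector}) --- namely $\|\mathbf u\|$, $\|\mathbf v\|$, $\|\mathbf u+\mathbf v\|$ and $\|\mathbf u-\mathbf v\|$ --- is determined, through the polarization identity $\|\mathbf u\pm\mathbf v\|^2=\|\mathbf u\|^2\pm 2\,\mathrm{Re}\,\langle \mathbf u,\mathbf v\rangle+\|\mathbf v\|^2$, by the three numbers $\|\mathbf u\|$, $\|\mathbf v\|$ and $\mathrm{Re}\,\langle \mathbf u,\mathbf v\rangle$. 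Hence it suffices to produce two complex numbers $z_1,z_2$ whose moduli and real product $\mathrm{Re}(z_1\overline{z_2})$ reproduce these three data: applying Lemma \ref{complex} to $z_1,z_2$ then yields precisely the asserted vector estimate.

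Concretely, I would first dispose of the trivial situations: when $\mathbf u=\mathbf v=0$, inequality (\ref{p>2-vector}) reads $0\le 0$ while the hypothesis of (\ref{1<p<2-vector}) is not satisfied; and when $p=1$ the second summand of (\ref{1<p<2-vector}) vanishes, so that inequality is merely the triangle inequality. Assume therefore that $(\mathbf u,\mathbf v)\neq(0,0)$ --- which is exactly the hypothesis of the first inequality and may be assumed without loss in the second --- and set $a=\|\mathbf u\|$, $b=\|\mathbf v\|$. By the Cauchy--Schwarz inequality $|\mathrm{Re}\,\langle \mathbf u,\mathbf v\rangle|\le ab$, so there exists $\theta$ (take $\theta=0$ if $ab=0$) with $ab\cos\theta=\mathrm{Re}\,\langle \mathbf u,\mathbf v\rangle$; put $z_1=a$ and $z_2=b\,e^{i\theta}$.

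The verification is then routine: $|z_1|=\|\mathbf u\|$, $|z_2|=\|\mathbf v\|$, and $\mathrm{Re}(z_1\overline{z_2})=ab\cos\theta=\mathrm{Re}\,\langle \mathbf u,\mathbf v\rangle$, whence
\begin{equation*}
|z_1\pm z_2|^2=a^2\pm 2ab\cos\theta+b^2=\|\mathbf u\pm\mathbf v\|^2,\qquad |z_1|^2+|z_2|^2=\|\mathbf u\|^2+\|\mathbf v\|^2\neq 0.
\end{equation*}
Thus Lemma \ref{complex} applies to the pair $z_1,z_2$; substituting the above equalities into (\ref{complex-1<p<2}) gives (\ref{1<p<2-vector}) for $1<p\le 2$, and substituting them into (\ref{complex-p>2}) gives (\ref{p>2-vector}) for $p\ge 2$, which completes the argument.

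I do not expect a genuine obstacle here; the one subtlety worth flagging is that in a complex Hilbert space the inner product need not be real, so one must pass to $\mathrm{Re}\,\langle \mathbf u,\mathbf v\rangle$ before choosing the phase $\theta$, and then check that the single pair $(z_1,z_2)$ matches all four norm-quantities simultaneously rather than one at a time. For completeness I note that the range $p\ge 2$ also admits a self-contained proof avoiding Lemma \ref{complex}: starting from the parallelogram identity $\bigl\|\tfrac{\mathbf u+\mathbf v}{2}\bigr\|^2+\bigl\|\tfrac{\mathbf u-\mathbf v}{2}\bigr\|^2=\tfrac12(\|\mathbf u\|^2+\|\mathbf v\|^2)$ and using the elementary inequalities $x^{q}+y^{q}\le(x+y)^{q}$ and $\bigl(\tfrac{x+y}{2}\bigr)^{q}\le\tfrac12(x^{q}+y^{q})$, valid for $x,y\ge 0$ and $q=p/2\ge 1$, one gets $\bigl\|\tfrac{\mathbf u+\mathbf v}{2}\bigr\|^p+\bigl\|\tfrac{\mathbf u-\mathbf v}{2}\bigr\|^p\le\bigl(\tfrac12(\|\mathbf u\|^2+\|\mathbf v\|^2)\bigr)^{p/2}\le\tfrac12(\|\mathbf u\|^p+\|\mathbf v\|^p)$.
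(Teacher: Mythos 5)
Your argument is correct and is essentially the paper's own proof: both reduce the vector inequalities to the complex--scalar Lemma \ref{complex}, the only cosmetic difference being that you encode $\mathbf u,\mathbf v$ as $z_1=a$ and $z_2=b\,e^{i\theta}$ via Cauchy--Schwarz, while the paper takes Gram--Schmidt coordinates on the two-dimensional span of $\mathbf u,\mathbf v$ and sets $z_1=x+iy$, $z_2=a+ib$. One remark: like the paper, you pass silently from the denominator $(|z_1|^2+|z_2|^2)^{(2-p)/2}$ appearing in Lemma \ref{complex} to the denominator $(\|\mathbf u\|+\|\mathbf v\|)^{2-p}$ of the theorem; this is harmless because $(\|\mathbf u\|+\|\mathbf v\|)^{2}\ge \|\mathbf u\|^{2}+\|\mathbf v\|^{2}$ and $2-p\ge 0$, so the complex inequality is in fact the stronger one, but the comparison deserves an explicit line.
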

	\begin{proof}  If the vectors ${\mathbf u}$, ${\mathbf v}$ are linearly dependent, the two inequalities reduce to the scalar case.  Assume that ${\mathbf u}$ and ${\mathbf v}$ are linearly independent.  Set $W$ the subspace of ${\mathbb H}$ spanned by these two vectors.  Using Gram-Schmidt, there exists an orthonormal basis $\{\mathbf{I}, \mathbf{J}\}$ of $W$.  We have
		$$\mathbf{u} = x \mathbf{I} + y \mathbf{J}\;\;\; and\;\;\; \mathbf{v} = a \mathbf{I} + b \mathbf{J},$$
		for $(x,y) \in \mathbb{R}^2$ and $(a,b) \in \mathbb{R}^2$.  Set $z_1 = x+ i y$ and $z_2 = a + i b$ in $\mathbb C$.
		Clearly the following hold
		$$\left\{\begin{array}{clll}
			\|\mathbf{u}\|^2 &= |z_1|^2 = x^2 + y^2 ,\\
			\|\mathbf{v}\|^2 &= |z_2|^2 = a^2 + b^2,\\
			\|\mathbf{u}+ \mathbf{v}\|^2 &= |z_1 + z_2 |^2 = (x+a)^2 + (y+b)^2,\\
			\|\mathbf{u}- \mathbf{v}\|^2 &= |z_1 - z_2 |^2 = (x-a)^2 + (y-b)^2.
		\end{array}\right.$$
		Lemma \ref{complex} implies
		$$\left|\frac{z_1+z_2}{2}\right|^{p}+\frac{p(p-1)}{2^{p+1}}\frac{|z_1-z_2|^2}{\left(|z_1|^2+|z_2|^2\right)^{\frac{2-p}{2}}}\leq \frac{1}{2}(|z_1|^p+|z_2|^p),$$
		for $1<p\leq 2$, and for $p\geq 2$, we have
		$$\left|\frac{z_1+z_2}{2}\right|^{p}+\left|\frac{z_1-z_2}{2}\right|^p\leq \frac{1}{2}(|z_1|^p+|z_2|^p),$$
		which obviously imply
		$$\left\|\frac{\mathbf{u} + \mathbf{v}}{2}\right\|^p + \frac{p(p-1)}{2^{p+1}}\frac{\|\mathbf{u}-\mathbf{v}\|^2}{(\|\mathbf{u}\|+\|\mathbf{v}\|)^{2-p}} \leq \frac{1}{2}(\|\mathbf{u}\|^p+\|\mathbf{v}\|^p),$$
		for $1 < p \leq 2$, provided $\|\mathbf u\| + \|\mathbf v\| \neq 0$, and if $p\geq 2$ it holds
		$$\left\|\frac{\mathbf{u} + \mathbf{v}}{2}\right\|^p + \left\|\frac{\mathbf{u} - \mathbf{v}}{2}\right\|^p \leq \frac{1}{2}(\|\mathbf{u}\|^p+\|\mathbf{v}\|^p).$$
		The proof of Theorem \ref{vector-p-inequalities} is complete.
	\end{proof}
	\begin{remark}\label{palchi}{\normalfont The preceding inequalities will be used (Theorem \ref{UM}) in the particular case of vectors ${\mathbf u}=(u_1,...,u_n)$ and ${\mathbf v}=(v_1,...,v_n)$ in ${\mathbb R}^n$ with Euclidean norms $|{\mathbf u}|=\left(\sum\limits_{1}^n|u_j|^2\right)^{\frac{1}{2}}$, $|{\mathbf v}|=\left(\sum\limits_{1}^n|v_j|^2\right)^{\frac{1}{2}}$.
}
\end{remark}
	\section{Modular vector spaces, variable exponent spaces}\label{modularvectorspaces}
	Since the variable exponent $p$-Laplacian $\Delta_p$ is the Fr\'{e}chet derivative of the Dirichlet integral (which is modular in nature) the norm structure of the variable exponent Sobolev space $W^{1,p}(\Omega)$ is insufficient for its study.\\	 
	 Aiming at demonstrating the importance of the modular structure of $W^{1,p}(\Omega)$ we present a concise overview of the theory of modular spaces. For a more comprehensive exploration of the subjects merely touched upon in this section, interested readers are directed to \cite{ve-book, KR, ML, musielak_book}.\\
	
	\begin{definition}\label{def-modular}\cite{musielak_book, nakano}
		A convex modular on a real vector space $X$ is a function $\varrho: X \to [0,\infty]$ satisfying the following conditions:
		\begin{enumerate}
			\item[(1)] $\varrho(x) = 0$ if and only if $x = 0$,
			\item[(2)] $\varrho(\alpha x) = \varrho(x)$, if $|\alpha| =1$,
			\item[(3)] $\varrho(\alpha x + (1-\alpha) y )\leq \alpha\varrho(x) + (1-\alpha)\varrho(y)$, for any $\alpha \in [0,1]$
			and any $x, y \in X$.
		\end{enumerate}
		In addition, $\varrho$ is said to be left-continuous
		if $\lim\limits_{r \to 1-} \ \varrho(r x) = \varrho(x)$ for any $x \in X$.  \\
	\end{definition}
	
	\medskip
	\noindent A modular function on a vector space $X$ naturally gives rise to a modular space.
	
	\begin{definition}\label{modular-space}  Given a convex modular $\varrho$ defined on the vector space $X$, the modular space generated by $\varrho$ is the set
		$$X_\varrho = \{x \in X; \ \lim\limits_{\alpha \to 0} \ \varrho(\alpha x) = 0 \}.$$
		The Luxemburg norm on $X$,  $\|.\|_\varrho: X_\varrho \to [0,\infty)$, is defined by
		$$ \|x\|_\varrho := \inf \left\{\alpha > 0;\; \varrho\left(\frac{x}{\alpha}\right) \leq 1\right\}.$$
	\end{definition}
	
	\medskip
	Variable exponent Lebesgue spaces, initially introduced in 1931 by Orlicz \cite{orlicz1931}, have gained substantial attention in recent years. For a comprehensive exploration of these spaces, including in-depth analysis and discussions, interested readers are encouraged to refer to \cite{ve-book, KR, ML}.
	
	\medskip
	\begin{definition}\label{ves-continuous}  Consider a domain $\Omega\subset {\mathbb R}^n$. We denote the vector space of all real-valued, Borel-measurable functions defined on $\Omega$ as ${\mathcal M}(\Omega)$. Within ${\mathcal M}(\Omega)$, we define ${\mathcal P}(\Omega)$ as the subset consisting of functions $p:\Omega\longrightarrow [1,\infty]$. For each function $p$ in ${\mathcal P}(\Omega)$, we define the set $\Omega_{\infty}$ as follows: $\Omega_{\infty} := \left\{  x\in\Omega:p(x)=\infty\right\}$.\\
		Now, we introduce the function $\varrho_p:{\mathcal M}(\Omega)\longrightarrow [0,\infty]$ defined as follows:
		$$\varrho_p(u)=\int\limits_{\Omega \setminus \Omega_{\infty}}|u(x)|^{p(x)}d\mu +\underset{x\in\Omega_{\infty}} \sup|u(x)|.$$
		This function $\varrho_p(u)$ is a convex and continuous modular on ${\mathcal M}(\Omega)$. We refer to the associated modular vector space as $L^{p(\cdot)}(\Omega)$ or simply $L^p(\Omega)$ when there is no potential for confusion.
	\end{definition}
	\medskip
	
	The associated Sobolev space is defined in the following manner:
	
	\begin{definition}
		For $\Omega$ and $p$ as in the preceding definition, $W^{1,p}(\Omega)$ will stand for the vector subspace of $L^p(\Omega)$ consisting of those functions whose weak derivatives also belong to $L^p(\Omega)$.
	\end{definition}
	A technical point arises when it comes to defining a modular on $W^{1,p}(\Omega)$, for given any norm in ${\mathbb R}^n$, say $|\cdot|$, then the functional
	\begin{align}
		\rho_{|\cdot|}:W^{1,p}(\Omega)\rightarrow [0,\infty]\\ \nonumber
		\rho_{|\cdot|}(u)=\varrho (u)+\varrho\left(|\nabla u|\right)
	\end{align}
	is a convex modular. In the sequel only the Euclidean norm in ${\mathbb R}^n$ will be considered and the corresponding modular will be simply denoted by $\rho$. Hence, the Sobolev space $W^{1,p}(\Omega)$ will be endowed with the modular
	\begin{align}\label{modularw1p}
		\rho_{|\cdot|}&:W^{1,p}(\Omega)\rightarrow [0,\infty]\\ \nonumber
		\rho_{|\cdot|}(u)&=\varrho (u)+\varrho\left(\left(\sum_{j=1}^n\left(\frac{\partial u}{\partial x_j}\right)^2\right)^{\frac{1}{2}}\right)
	\end{align}
	
	\section{The Dirichlet energy integral}\label{dirichletintegral}
	The primary objective of this section is to utilize the previously established results and techniques to examine the minimization of specific integrals related to Dirichlet energy.
	
	\medskip
	Throughout the subsequent discussion, let $\Omega \subset {\mathbb R}^n$ denote a bounded domain with a smooth ($C^{2}$) boundary. We will utilize $W^{1,p}_0(\Omega)$ to represent the closure, in the Luxemburg norm, of $W^{1,p}(\Omega)$, specifically, the closure of $C_0^{\infty}(\Omega)$. Furthermore, $\left(W^{1,p}_0(\Omega)\right)^{\ast}$ will refer to the topological dual of $W^{1,p}_0(\Omega)$. It is important to note that the Luxemburg norm considered here corresponds to the modular (\ref{modularw1p}).\\
	
	Throughout, we use the notations
	$$\rho_{\nabla, \Omega^*}(u) = \int\limits_{\Omega^*}\frac{|\nabla u|^p}{p}dx\;\; and\;\;\; \rho_{\Omega^*}(u)= \int\limits_{\Omega^*}\frac{ |u|^p}{p}dx,$$
	where $\Omega^*$ is a subset of $\Omega$.  When $\Omega^* = \Omega$, we write $\rho_{\nabla, \Omega} = \rho_\nabla$ and $\rho_{\Omega} = \rho$.  Note that we have $\rho_\nabla(u) = \rho(|\nabla u|)$.  Moreover the functional $\rho$ is a convex modular on $L^p(\Omega)$; the associated Luxemburg norm
	\begin{equation}
		\|u\|_p=\inf\left\{\lambda >0: \rho\left(\frac{u}{\lambda}\right)\leq 1\right\}
	\end{equation}
	is subject to the inequality $\|u\|^{p_{-}}_p\leq \rho(u)$ whenever $\|u\|_p\geq 1$.
	
	\begin{theorem} \label{boundedbelow}
		Let $\Omega \subset {\mathbb R}^n$ be a smooth, bounded domain ($C^{1,\alpha}$ will suffice), $p\in C(\overline{\Omega})$, $p_->1$.  Consider $f\in \left(W_0^{1,p}(\Omega)\right)^{\ast}$. Then, for any $\varphi \in W^{1,p}(\Omega)$, the functional $F: W_0^{1,p}(\Omega)\rightarrow {\mathbb R}$ defined by
		\begin{align}
			F(u)= \rho_\nabla(\varphi -u) - f(u),
		\end{align}
		is bounded below.
	\end{theorem}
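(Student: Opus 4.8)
The plan is to exploit the fact that $\rho_\nabla(\varphi-u)$ grows at least like the $p_-$-th power of $\big\||\nabla(\varphi-u)|\big\|_p$, whereas $f$ is only linearly bounded, after transferring the estimate for $f$ from the full Sobolev norm onto the gradient term that $\rho_\nabla$ actually measures.

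First I would collect two inputs. Since $\Omega$ is bounded and $p\in C(\overline\Omega)$ with $p_->1$, the variable-exponent Poincar\'e inequality holds on $W_0^{1,p}(\Omega)$, so $u\mapsto \big\||\nabla u|\big\|_p$ is a norm on $W_0^{1,p}(\Omega)$ equivalent to the Luxemburg norm attached to the modular \eqref{modularw1p}; hence there is $\gamma>0$ with
\[
|f(u)|\le \|f\|_{(W_0^{1,p}(\Omega))^{\ast}}\,\|u\|_{W_0^{1,p}(\Omega)}\le \gamma\,\big\||\nabla u|\big\|_p\qquad\text{for all } u\in W_0^{1,p}(\Omega).
\]
Secondly, the inequality recalled just before the statement, applied to $|\nabla(\varphi-u)|\in L^p(\Omega)$ and combined with $\rho_\nabla(v)=\rho(|\nabla v|)$, gives $\big\||\nabla(\varphi-u)|\big\|_p^{\,p_-}\le \rho_\nabla(\varphi-u)$ whenever $\big\||\nabla(\varphi-u)|\big\|_p\ge 1$.

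Then I would estimate directly. Write $t:=\big\||\nabla(\varphi-u)|\big\|_p\ge 0$ and note that $A:=\gamma\,\big\||\nabla\varphi|\big\|_p$ is a finite constant because $\varphi\in W^{1,p}(\Omega)$. Using the triangle inequality for the $L^p$-norm together with the bound on $f$,
\[
F(u)=\rho_\nabla(\varphi-u)-f(u)\ \ge\ \rho_\nabla(\varphi-u)-\gamma\big\||\nabla u|\big\|_p\ \ge\ \rho_\nabla(\varphi-u)-A-\gamma t .
\]
If $t\le 1$, this already yields $F(u)\ge -A-\gamma$, since $\rho_\nabla\ge 0$. If $t>1$, the second input gives $\rho_\nabla(\varphi-u)\ge t^{p_-}$, so $F(u)\ge t^{p_-}-\gamma t-A\ge -B-A$, where $B:=-\inf_{s\ge 0}\big(s^{p_-}-\gamma s\big)$ is finite because $p_->1$ makes $s\mapsto s^{p_-}-\gamma s$ continuous and coercive on $[0,\infty)$, hence bounded below. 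Combining the two cases, $F(u)\ge -\big(A+\max\{\gamma,B\}\big)$ for every $u\in W_0^{1,p}(\Omega)$, which is the assertion.

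The only genuinely delicate point is the first input: one must invoke the variable-exponent Poincar\'e inequality to control $f$ by the $L^p$-norm of the gradient alone, and then accommodate the familiar variable-exponent phenomenon that a modular dominates a power of the norm only outside the unit ball, which forces the case distinction $t\le 1$ versus $t>1$. Everything else is the standard coercivity argument for a $p$-Laplacian-type energy perturbed by a bounded linear functional.
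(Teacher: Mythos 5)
Your proposal is correct and follows essentially the same route as the paper: bound $|f(u)|$ by the gradient norm (the paper does this by identifying $\|u\|_{W_0^{1,p}}$ with $\||\nabla u|\|_{L^p}$, which is the Poincar\'e inequality you invoke explicitly), split off $\||\nabla\varphi|\|_p$ by the triangle inequality, and use the modular-versus-norm inequality to reduce boundedness below to the coercivity of $s\mapsto s^{p_-}-\gamma s$ on $[1,\infty)$, treating the case $\||\nabla(\varphi-u)|\|_p\le 1$ separately. The only cosmetic difference is that you parametrize the one-variable minimization by the norm $t$ while the paper parametrizes it by the modular value $x=\rho_\nabla(\varphi-u)$; the two are equivalent.
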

	\begin{proof}
		Assume $\||\nabla (\varphi-u)|\|_p\geq 1$.  Then it holds
		$$\begin{array}{lll}
			F(u)&= \rho_\nabla(\varphi -u) - f(u)\\
			&\geq \rho_\nabla(\varphi -u) - \|f\|_{\left(W_0^{1,p}(\Omega)\right)^{\ast}}\|u\|_{W_0^{1,p}(\Omega)}\\
			&= \rho_\nabla(\varphi -u) - \|f\|_{\left(W_0^{1,p}(\Omega)\right)^{\ast}}\||\nabla u|\|_{L^{p}(\Omega)}\\
			&\geq \rho_\nabla(\varphi -u) - \|f\|_{\left(W_0^{1,p}(\Omega)\right)^{\ast}}(\||\nabla (u-\varphi)|\|_{L^p(\Omega)}+\||\nabla \varphi|\|_{L^p(\Omega)})\\
			&\geq \Big(\rho_\nabla(\varphi -u)\Big)^{\frac{1}{p_-}}\left(\Big(\rho_\nabla(\varphi -u)\Big)^{1-\frac{1}{p_-}}-\|f\|_{\left(W_0^{1,p}(\Omega)\right)^{\ast}}\right) -\||\nabla \varphi|\|_{L^p(\Omega)}.
		\end{array}$$
		If we set
		$$h(x) = x^{\frac{1}{p_-}}\left(x^{1-\frac{1}{p_-}} - \|f\|_{\left(W_0^{1,p}(\Omega)\right)^{\ast}}\right) - \||\nabla \varphi|\|_{L^p(\Omega)}.$$
		A straightforward calculation shows that $h(x)$ is bounded below for $x \in [1,+\infty)$.
		On the other hand, if $\||\nabla (\varphi-u)|\|_p\leq 1$, it follows
		$$\begin{array}{ll}
			F(u)&\geq \rho_\nabla(\varphi -u)-\|f\|_{\left(W_0^{1,p}(\Omega)\right)^{\ast}}\left(1+\||\nabla \varphi|\|_{L^p(\Omega)}\right) \\
			&\geq -\|f\|_{\left(W_0^{1,p}(\Omega)\right)^{\ast}}\left(1+\||\nabla \varphi|\|_{L^p(\Omega)}\right).
		\end{array}$$
		Thus, $F$ is bounded below as claimed.
	\end{proof}
	
	\medskip
	In the following theorem, we discuss the minimization of a variation of the Dirichlet energy integral.
	
	\medskip
	\begin{theorem}\label{UM}
		Let $\Omega \subset {\mathbb R}^n$ be a smooth, bounded domain ($C^{1,\alpha}$ will suffice), $p\in C(\overline{\Omega})$, $p_->1$.  Let $q:\Omega\rightarrow [0,\infty)$ be a non-negative, measurable function.
		Consider $f\in \left(W_0^{1,p}(\Omega)\right)^{\ast}$.  Consider the functional $G: W_0^{1,p}(\Omega)\rightarrow {\mathbb R}$ defined by
		$$G(u)= \rho_\nabla(u-\varphi) + \int\limits_{\Omega}\frac{q}{p}|u-\varphi|^p\,dx - f(u),$$
		where $\varphi \in W^{1,p}(\Omega)$.
		Then any minimizing sequence $(u_n)$ of $G$ is convergent.  Its limit is independent of the minimizing sequence and is the unique minimizer of $G$.
	\end{theorem}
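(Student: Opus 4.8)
The plan is to run the classical ``uniform convexity yields a unique minimizer'' scheme, but entirely at the level of the modular rather than the Luxemburg norm, using the pointwise Clarkson-type inequalities of Theorem~\ref{vector-p-inequalities} (in the form of Remark~\ref{palchi}) together with their scalar analogues from Lemma~\ref{p-inequalities}. Throughout, write $p_+=\sup_{\overline\Omega}p<\infty$, split $\Omega=\Omega_1\cup\Omega_2$ with $\Omega_1=\{x:p(x)\ge 2\}$ and $\Omega_2=\{x:1<p(x)<2\}$, and for $u\in W^{1,p}_0(\Omega)$ set $w=u-\varphi\in W^{1,p}(\Omega)$. First observe that, since $q\ge 0$, the extra term $\int_\Omega\frac{q}{p}|u-\varphi|^p\,dx$ is nonnegative, so $G(u)\ge \rho_\nabla(\varphi-u)-f(u)$, which is bounded below by Theorem~\ref{boundedbelow}; as $G$ is real-valued, $m:=\inf_{W^{1,p}_0(\Omega)}G$ is finite.

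The core step is a modular uniform convexity estimate. Fix a minimizing sequence $(u_n)$, $G(u_n)\to m$, and put $w_n=u_n-\varphi$. Applying (\ref{p>2-vector}) and (\ref{scalar-p>2}) on $\Omega_1$ and (\ref{1<p<2-vector}), (\ref{scalar-1<p<2}) on $\Omega_2$ to the pairs $(\nabla w_n(x),\nabla w_m(x))$ and $(w_n(x),w_m(x))$, dividing by $p(x)$, integrating, and using the linearity of $f$ together with $\frac{u_n+u_m}{2}-\varphi=\frac{w_n+w_m}{2}\in W^{1,p}_0(\Omega)$, one obtains
$$2\,G\!\left(\tfrac{u_n+u_m}{2}\right)\le G(u_n)+G(u_m)-2D_{n,m},$$
where $D_{n,m}\ge 0$ collects four nonnegative ``deficit'' integrals: on $\Omega_1$ the terms $\int_{\Omega_1}\frac1p\big|\tfrac{\nabla(u_n-u_m)}{2}\big|^p\,dx$ and $\int_{\Omega_1}\frac{q}{p}\big|\tfrac{u_n-u_m}{2}\big|^p\,dx$, and on $\Omega_2$ the term $\int_{\Omega_2}\frac{p-1}{2^{p+1}}\frac{|\nabla(u_n-u_m)|^2}{(|\nabla w_n|+|\nabla w_m|)^{2-p}}\,dx$ together with its $q$-weighted analogue having $|w_n|,|w_m|$ in the denominator (with the usual convention that a vanishing denominator forces the numerator to vanish). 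Since $G\big(\tfrac{u_n+u_m}{2}\big)\ge m$, this gives $0\le D_{n,m}\le \tfrac12\big(G(u_n)+G(u_m)\big)-m\to 0$.

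It remains to convert $D_{n,m}\to 0$ into $\rho_\nabla(u_n-u_m)\to 0$ and $\int_\Omega\frac{q}{p}|u_n-u_m|^p\,dx\to 0$. On $\Omega_1$ this is immediate since $2^{-p}\ge 2^{-p_+}$. On $\Omega_2$ this is the only delicate point, and it is where the variable exponent must be handled with care: a growth estimate as in the proof of Theorem~\ref{boundedbelow} first shows $\rho_\nabla(w_n)$ and $\int_\Omega\frac{q}{p}|w_n|^p\,dx$ are bounded, hence $\int_{\Omega_2}(|\nabla w_n|+|\nabla w_m|)^p\,dx\le C$ uniformly; then for $0<\varepsilon<1$ the pointwise Young inequality with conjugate exponents $\tfrac{2}{p(x)},\tfrac{2}{2-p(x)}$ gives, on $\Omega_2$,
$$|\nabla(u_n-u_m)|^p\le \varepsilon^{-2}\,\frac{|\nabla(u_n-u_m)|^2}{(|\nabla w_n|+|\nabla w_m|)^{2-p}}+\frac{\varepsilon^2}{2}\,(|\nabla w_n|+|\nabla w_m|)^p,$$
using the elementary bounds $\sup_{1<p<2}\tfrac p2\varepsilon^{-2/p}\le\varepsilon^{-2}$ and $\sup_{1<p<2}\tfrac{2-p}{2}\varepsilon^{2/(2-p)}=\tfrac{\varepsilon^2}{2}$; integrating over $\Omega_2$, sending $n,m\to\infty$ and then $\varepsilon\to 0$ yields $\int_{\Omega_2}|\nabla(u_n-u_m)|^p\,dx\to 0$, and the $q$-weighted term is treated identically. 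Thus $\rho_\nabla(u_n-u_m)\to 0$; since $p$ is bounded this forces $\||\nabla(u_n-u_m)|\|_p\to 0$, and by the Poincaré inequality on $W^{1,p}_0(\Omega)$ also $\|u_n-u_m\|_p\to 0$, so $(u_n)$ is Cauchy in the Banach space $W^{1,p}_0(\Omega)$ and converges to some $u_\ast\in W^{1,p}_0(\Omega)$.

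Finally, $G$ is convex (a sum of the convex modulars $u\mapsto\rho_\nabla(u-\varphi)$ and $u\mapsto\int_\Omega\frac{q}{p}|u-\varphi|^p\,dx$ and the continuous linear functional $-f$) and sequentially lower semicontinuous for the norm topology — apply Fatou along a subsequence of $(u_n)$ whose gradients and values converge a.e., using continuity of $-f$ — so $G(u_\ast)\le\liminf_n G(u_n)=m$, i.e. $u_\ast$ minimizes $G$. Uniqueness follows by feeding a pair of minimizers $u_\ast,u_{\ast\ast}$ into the core estimate: then $D_{u_\ast,u_{\ast\ast}}=0$, which forces $\nabla(u_\ast-u_{\ast\ast})=0$ a.e., and since $u_\ast-u_{\ast\ast}\in W^{1,p}_0(\Omega)$ the Poincaré inequality gives $u_\ast=u_{\ast\ast}$. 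Independence of the limit from the chosen minimizing sequence is then automatic, since every minimizing sequence converges to a minimizer and the minimizer is unique. The principal obstacle is precisely the $\Omega_2$ step: recovering $\int_{\Omega_2}|\nabla(u_n-u_m)|^p$ from the non-standard deficit $\int_{\Omega_2}|\nabla(u_n-u_m)|^2(|\nabla w_n|+|\nabla w_m|)^{p-2}$ with all constants kept uniform in the variable exponent $p(\cdot)$.
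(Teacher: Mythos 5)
Your argument is correct and shares the paper's overall architecture: boundedness below via Theorem \ref{boundedbelow}, boundedness of the minimizing sequence, and a quantitative convexity deficit extracted from the Clarkson-type inequalities of Theorem \ref{vector-p-inequalities} after splitting $\Omega$ into $\{p\geq 2\}$ and $\{1<p<2\}$. The genuine divergence is in the one step both of you identify as the crux: converting the non-standard deficit $\int \frac{|\nabla(u_n-u_m)|^2}{(|\nabla w_n|+|\nabla w_m|)^{2-p}}\,dx$ on $\{1<p<2\}$ into smallness of $\int|\nabla(u_n-u_m)|^p\,dx$. The paper argues by contradiction: it fixes $\varepsilon_0>0$ witnessing failure of the Cauchy property, introduces the threshold set $\Omega_2=\{|\nabla(u_k-u_m)|\leq\gamma(|\nabla u_k|+|\nabla u_m|)\}$ with $\gamma$ tuned to $\varepsilon_0$ and to the uniform modular bound $C$, shows the deficit mass concentrates off $\Omega_1\cup\Omega_2$, and there bounds the Clarkson remainder below by $\frac{\gamma^{2-p}(p-1)}{2}\left|\frac{\nabla(u_k-u_m)}{2}\right|^p$ to force a contradiction. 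You instead proceed directly: the deficit $D_{n,m}\to 0$, and the weighted Young inequality $A^{p/2}B^{(2-p)/2}\leq \varepsilon^{-2}A+\frac{\varepsilon^2}{2}B$, with constants uniform over $p\in(1,2)$, interpolates between the deficit and the uniformly bounded quantity $\int(|\nabla w_n|+|\nabla w_m|)^p\,dx$, after which one lets $n,m\to\infty$ before $\varepsilon\to 0$. The two devices encode the same dichotomy (the gradient difference is either small relative to the gradients or comparable to them), but your version dispenses with the contradiction scaffolding and the bookkeeping of $\gamma$ and $\eta_n$, makes the passage from modular-Cauchy to norm-Cauchy explicit via $p_+<\infty$, and yields uniqueness directly from a vanishing deficit rather than from the arbitrariness of the minimizing sequence; the paper's route, by contrast, isolates a single bad pair $(u_k,u_m)$ and so never needs the interpolation inequality. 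Both keep every constant uniform in the variable exponent, which is the point of the whole exercise.
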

	\begin{proof}
		Throughout the proof, we will the notation
		$$\rho_q(u) = \int\limits_{\Omega}\frac{q}{p}|u|^p\,dx.$$
		Note that $G(u) = F(u) +\rho_q(u-\varphi) \geq F(u)$, for any $u \in W_0^{1,p}(\Omega)$.  Theorem \ref{boundedbelow} will then imply that $G$ is bounded below.  Set
		$$d=\inf_{v\in W_0^{1,p}(\Omega)}\ G(v) > -\infty.$$
		Let $(u_n)\subset W_0^{1,p}(\Omega) $ be a minimizing sequence of $G$, i.e.,  $\lim\limits_{n \to \infty} G(u_n) = d$.  Set $\eta_n = G(u_n) -d \geq 0$, for any $n \in \mathbb{N}$.  Clearly we have $\lim\limits_{n \to \infty} \eta_n =0$.  First note that $(u_n)$ is bounded in $W^{1,p}_0(\Omega)$.  Assume not.  Without loss of any generality, we assume that $\|\nabla (u_n-\varphi)\|_{p}\rightarrow \infty$.  Then $\|\nabla (u_n- \varphi)\|_{p}>1$ for $n\geq N$, for some $N \in \mathbb{N}$ and thus, the inequality
		$$\frac{1}{p_+}\|\nabla (u_n-\varphi)\|_{p}^{p_-}\leq  \rho_\nabla(\varphi -u_n) \leq G(u_n) + f(u_n),$$
		would imply
		\begin{align*}
			\frac{1}{p_+}\|\nabla (u_n-\varphi)\|_{p}^{p_-}
			&\leq \|f\|_{\left(W^{1,p}_0(\Omega)\right)^{\ast}}\|\nabla u_n\|_{p}+ d + \sup\limits_{k \in \mathbb{N}} \eta_k \\ \nonumber
			&\leq \|f\|_{\left(W^{1,p}_0(\Omega)\right)^{\ast}} \|\nabla (u_n-\varphi)\|_{p} \\ &\mbox{\hspace*{2cm}} +\|f\|_{\left(W^{1,p}_0(\Omega)\right)^{\ast}}\|\nabla \varphi\|_{p}+ d + \sup\limits_{k \in \mathbb{N}} \eta_k,
		\end{align*}
		which is certainly not possible since $p_->1$.  Therefore, $(u_n)$ is bounded in $W^{1,p}_0(\Omega)$ as claimed. Poincar\'{e}'s inequality yields then the boundedness of $(u_n)$ in $W^{1,p}_0(\Omega)$.  Next, we prove that the minimizing sequence $(u_n)$ is Cauchy in $W_0^{1,p}(\Omega)$.   Assume not.  Then there exists $\varepsilon_0 > 0$ such that for any $I \in \mathbb{N}$, there exists $j,k > I$ such that
		\begin{equation}\label{notcauchy}
			\rho_\nabla\left(\frac{u_{j}-u_{k}}{2}\right) \geq \varepsilon_0.
		\end{equation}
		Using the boundedness of $(u_n)$, we set $\displaystyle C = \sup_{n \in \mathbb{N}} \ \rho_\nabla(u_n) < +\infty$.  Note that $C > 0$.  Set
		$$\gamma = \min\left\{\frac{\varepsilon_0}{8(p_{-} -1)C}, \frac{\varepsilon_0}{4\ C}, \frac{1}{2}\right\}.$$
		Then
		$$\frac{\varepsilon_0}{2} - \frac{(p_{-} -1)\ \gamma}{2}\Big(\rho_\nabla(u_m) + \rho_\nabla(u_k) \Big)
		\geq \frac{\varepsilon_0}{2} - (p_{-} -1)\ \gamma\ C > \frac{\varepsilon_0}{4},$$
		for any $m, k \in \mathbb{N}$.  Let $I_0 \in \mathbb{N}$ such that for any $n \geq I_0$, we have
		$$\eta_n < \min\left\{\frac{\varepsilon_0}{8}, \frac{\gamma (p_{-} -1)\varepsilon_0}{16}\right\}.$$
		Set $\Omega_1=\{x\in \Omega: p(x)\geq 2\}$.  Fix $m, k > I_0$ such that (\ref{notcauchy}) is satisfied.  On account of (\ref{notcauchy}), we have $\displaystyle \rho_{\nabla, \Omega_1} \left(\frac{u_m-u_k}{2}\right)\geq \frac{\varepsilon_0}{2}$ or $\displaystyle \rho_{\nabla, \Omega\setminus \Omega_1} \left(\frac{u_m-u_k}{2}\right)\geq \frac{\varepsilon_0}{2}$. Assume
		$$\rho_{\nabla, \Omega_1} \left(\frac{u_m-u_k}{2}\right) \geq \frac{\varepsilon_0}{2}$$
		holds.  Then by virtue of Remark \ref{palchi} and the first inequality in Theorem \ref{vector-p-inequalities} , it would then follow that
		$$\begin{array}{lll}
			\displaystyle \rho_{\nabla, \Omega_1} \left(\frac{u_m+u_k}{2} - \varphi\right)+ \frac{\varepsilon_0}{2}&\leq \displaystyle \rho_{\nabla, \Omega_1} \left(\frac{u_m+u_k}{2} - \varphi\right) + \rho_{\nabla, \Omega_1} \left(\frac{u_m-u_k}{2}\right) \\
			&\leq \displaystyle  \frac{1}{2}\rho_{\nabla, \Omega_1} \left(u_k - \varphi\right)
			+\frac{1}{2}\rho_{\nabla, \Omega_1} \left(u_m - \varphi\right).
		\end{array}$$
		The convexity of $W_0^{1,p}(\Omega)$ yields
		$$\begin{array}{lll}
			d&\displaystyle \leq \rho_\nabla\left(\frac{u_k+u_m}{2}-\varphi\right) + \rho_q\left(\frac{u_k+u_m}{2}-\varphi\right)- f\left(\frac{u_k+u_m}{2}\right)\\
			&\displaystyle = \Big(\rho_{\nabla, \Omega_1} + \rho_{\nabla, \Omega\setminus \Omega_1}\Big) \left(\frac{u_k+u_m}{2}-\varphi\right) -f\left(\frac{u_k+u_m}{2}\right) + \rho_q\left(\frac{u_k+u_m}{2}-\varphi\right)\\
			&\leq \displaystyle \rho_{\nabla, \Omega_1}\left(\frac{u_k+u_m}{2}-\varphi\right) + \frac{1}{2}\rho_{\nabla, \Omega\setminus \Omega_1}(u_k-\varphi) + \frac{1}{2}\rho_{\nabla, \Omega\setminus \Omega_1}(u_m-\varphi)\\
			&\displaystyle \mbox{\hspace*{2cm}} -f\left(\frac{u_k+u_m}{2}\right) + \rho_q\left(\frac{u_k+u_m}{2}-\varphi\right) . \\
		\end{array}$$
		The previous estimate yields
		$$\begin{array}{lll}
			d&\leq \displaystyle  \frac{1}{2}\rho_{\nabla, \Omega_1} \left(u_k - \varphi\right)
			+\frac{1}{2}\rho_{\nabla, \Omega_1} \left(u_m - \varphi\right) - \frac{\varepsilon_0}{2} + \frac{1}{2}\rho_{\nabla, \Omega\setminus \Omega_1}(u_k-\varphi)\\
			&\displaystyle \mbox{\hspace*{2cm}} + \frac{1}{2}\rho_{\nabla, \Omega\setminus \Omega_1}(u_m-\varphi) -f\left(\frac{u_k+u_m}{2}\right) + \rho_q\left(\frac{u_k+u_m}{2}-\varphi\right) \\
			&= \displaystyle \frac{1}{2}\rho_\nabla \left(u_k - \varphi\right)
			+\frac{1}{2}\rho_\nabla \left(u_m - \varphi\right) - \frac{\varepsilon_0}{2}-f\left(\frac{u_k+u_m}{2}\right) + \rho_q\left(\frac{u_k+u_m}{2}-\varphi\right) \\
		\end{array}$$
		which implies
		$$d \leq \frac{1}{2}\Big(\rho_\nabla \left(u_k - \varphi\right) + \rho_q\left(u_k-\varphi\right)\Big) + \frac{1}{2}\Big(\rho_\nabla \left(u_m - \varphi\right) + \rho_q\left(u_m-\varphi\right) -f(u_m)\Big) - \frac{\varepsilon_0}{2}.$$
		Using the definition of $G$ it follows that
		$$d \leq \frac{1}{2} G(u_k) + \frac{1}{2} G(u_m) -\frac{\varepsilon_0}{2} \leq d+ \frac{\eta_m + \eta_k}{2}-\frac{\varepsilon_0}{2}\leq d+\frac{\varepsilon_0}{8} -\frac{\varepsilon_0}{2} = d - \frac{3 \varepsilon_0}{8},$$
		which is a contradiction.  Next, we assume
		$$\rho_{\nabla, \Omega\setminus \Omega_1} \left(\frac{u_k-u_m}{2}\right) \geq \frac{\varepsilon_0}{2}.$$
		Set $\Omega_2=\{x\in \Omega\setminus \Omega_1: |\nabla (u_k-u_m)|\leq \gamma (|\nabla u_k|+|\nabla u_m|)\}$.
		We claim that the following inequality holds
		\begin{equation}\label{differenceoncomplement}
			\rho_{\nabla, \Omega\setminus (\Omega_1\cup \Omega_2)} \left(\frac{u_k-u_m}{2}\right) \geq \frac{\varepsilon_0}{4}.
		\end{equation}
		Indeed, observe first that
		$$\rho_{\nabla, \Omega_2} \left(\frac{u_k-u_m}{2}\right) \leq
		\frac{\gamma}{2}\int\limits_{\Omega_2}\frac{1}{p}\left(|\nabla u_k|^p+|\nabla u_m|^p\right),$$
		from which it follows that
		\begin{align*}
			\rho_{\nabla, \Omega\setminus (\Omega_1\cup \Omega_2)} \left(\frac{u_k-u_m}{2}\right) & =
			\rho_{\nabla, \Omega\setminus \Omega_1} \left(\frac{u_k-u_m}{2}\right) - \rho_{\nabla, \Omega_2} \left(\frac{u_k-u_m}{2}\right)\\
			&\geq \frac{\varepsilon_0}{2}-\frac{\gamma}{2}\int\limits_{\Omega_2}\frac{1}{p}\left(|\nabla u_k|^p+|\nabla u_m|^p\right)\\
			& \geq \frac{\varepsilon_0}{2} - \gamma \ C \geq \frac{\varepsilon_0}{4},
		\end{align*}
		as claimed.  Observe that $|\nabla (u_m-u_k)|>\gamma \Big(|\nabla u_k|+|\nabla u_m|\Big)$ on $\Omega\setminus\left(\Omega_1\cup \Omega_2\right)$, and therefore the following hold
		\begin{align*}
			&\frac{1}{p}\left|\nabla\left(\frac{u_k+u_m}{2}-\varphi\right)\right|^pdx+
			\frac{\gamma^{2-p}(p-1)}{2}\left |\frac{\nabla (u_k-u_m)}{2}\right|^p \\ \nonumber &\leq \frac{1}{p}\left|\nabla\left(\frac{u_k+u_m}{2}-\varphi\right)\right|^pdx \\ \nonumber &+
			\frac{p(p-1)}{2p}\left|\frac{\nabla (u_k-u_m)}{|\nabla u_k|+|\nabla u_m|}\right|^{2-p}\left|\frac{\nabla (u_k-u_m)}{2}\right|^p\\ \nonumber &\leq \frac{1}{2p}\left( |\nabla (\varphi-u_k)|^p+|\nabla (\varphi-u_m)|^p \right),
		\end{align*}
		by virtue of the second inequality in Theorem \ref{vector-p-inequalities}. From the preceding estimate it is clear that
		\begin{align*}
			&\rho_{\nabla, \Omega\setminus (\Omega_1\cup \Omega_2)} \left(\frac{u_k+u_m}{2}-\varphi\right) + \frac{\gamma(p_--1)}{2}\frac{\varepsilon_0}{4}\\
			&\leq \rho_{\nabla, \Omega\setminus (\Omega_1\cup \Omega_2)} \left(\frac{u_k+u_m}{2}-\varphi\right)
			+\int\limits_{\Omega \setminus \left(\Omega_1\cup\Omega_2\right)}\frac{\gamma\ p\ (p-1)}{2} \frac{1}{p}\left |\frac{\nabla (u_k-u_m)}{2}\right|^pdx \\ \nonumber
			&\leq \rho_{\nabla, \Omega\setminus (\Omega_1\cup \Omega_2)} \left(\frac{u_k+u_m}{2}-\varphi\right)
			+\int\limits_{\Omega \setminus \left(\Omega_1\cup\Omega_2\right)}\frac{\gamma^{2-p}(p-1)}{2}\left |\frac{\nabla (u_k-u_m)}{2}\right|^pdx \\
			& \leq \frac{1}{2}\Big(\rho_{\nabla, \Omega\setminus (\Omega_1\cup \Omega_2)} \left(u_k-\varphi\right) + \rho_{\nabla, \Omega\setminus (\Omega_1\cup \Omega_2)} \left(u_m-\varphi\right)\Big)
		\end{align*}
		which implies
		$$\begin{array}{lll}
			\rho_{\nabla, \Omega\setminus (\Omega_1\cup \Omega_2)} \left(\frac{u_k+u_m}{2}-\varphi\right) &\leq
			\displaystyle \frac{1}{2}\Big(\rho_{\nabla, \Omega\setminus (\Omega_1\cup \Omega_2)} \left(u_k-\varphi\right) + \rho_{\nabla, \Omega\setminus (\Omega_1\cup \Omega_2)} \left(u_m-\varphi\right)\Big) \\
			&\displaystyle \mbox{\hspace*{3cm}} - \frac{\gamma\ (p_--1)\ \varepsilon_0}{8}.
		\end{array}$$
		Finally,
		\begin{align}\label{finally} \nonumber
			\rho_\nabla\left(\frac{u_k+u_m}{2}-\varphi\right) &= \rho_{\nabla, \Omega_1\cup \Omega_2}\left(\frac{u_k+u_m}{2}-\varphi\right) + \rho_{\nabla, \Omega\setminus (\Omega_1\cup \Omega_2} \left(\frac{u_k+u_m}{2}-\varphi\right)\\ \nonumber
			&\leq \frac{1}{2} \rho_{\nabla, \Omega_1\cup \Omega_2}(u_k - \varphi) + \frac{1}{2} \rho_{\nabla, \Omega_1\cup \Omega_2}(u_m - \varphi) \\ \nonumber
			&\mbox{\hspace*{3cm}}+ \rho_{\nabla, \Omega\setminus (\Omega_1\cup \Omega_2)} \left(\frac{u_k+u_m}{2}-\varphi\right)\\ \nonumber
			&\leq \frac{1}{2} \rho_{\nabla, \Omega_1\cup \Omega_2}(u_k - \varphi) + \frac{1}{2} \rho_{\nabla, \Omega_1\cup \Omega_2}(u_m - \varphi) \\ \nonumber
			&\mbox{\hspace*{1cm}}+\frac{1}{2}\rho_{\nabla, \Omega\setminus (\Omega_1\cup \Omega_2)} \left(u_k-\varphi\right)+\frac{1}{2} \rho_{\nabla, \Omega\setminus (\Omega_1\cup \Omega_2)} \left(u_m-\varphi\right)\\
			&\mbox{\hspace*{3cm}}- \frac{\gamma\ (p_--1)\ \varepsilon_0}{8}.
		\end{align}
		Hence
		$$\begin{array}{lll}
			\displaystyle \rho_\nabla\left(\frac{u_k+u_m}{2}-\varphi\right) &\displaystyle + \rho_q \left(\frac{u_k+u_m}{2}-\varphi\right) - f\left(\frac{u_k+u_m}{2}\right) \leq\\
			&\displaystyle \frac{1}{2}\Big(\rho_\nabla(u_k-\varphi) + \rho_\nabla(u_m-\varphi)\Big)- \frac{\gamma\ (p_--1)\ \varepsilon_0}{8}\\
			&\mbox{\hspace*{1cm}} \displaystyle + \rho_q \left(\frac{u_k+u_m}{2}-\varphi\right) - f\left(\frac{u_k+u_m}{2}\right)\\
			&\displaystyle \leq \frac{1}{2}\Big(\rho_\nabla(u_k-\varphi) + \rho_\nabla(u_m-\varphi)\Big)- \frac{\gamma\ (p_--1)\ \varepsilon_0}{8}\\
			&\mbox{\hspace*{1cm}} \displaystyle + \frac{1}{2}\Big(\rho_q (u_k-\varphi)+\rho_q (u_m-\varphi)\Big) - \frac{f(u_k)+f(u_m)}{2}\\
			&= \displaystyle \frac{1}{2}\rho_\nabla(u_k-\varphi)+\frac{1}{2}\rho_q (u_k-\varphi)-\frac{1}{2}f(u_k) +\frac{1}{2}\rho_\nabla(u_m-\varphi)\\
			&\displaystyle \mbox{\hspace*{1cm}} +\frac{1}{2}\rho_q (u_m-\varphi)-\frac{1}{2}f(u_m) - \frac{\gamma\ (p_--1)\ \varepsilon_0}{8}.
		\end{array}$$
		Using the definition of $G$, it is concluded that
		\begin{align*}
			d\leq G\left(\frac{u_k+u_m}{2}\right) &\leq \frac{G(u_k) + G(u_m)}{2}-\frac{\gamma(p_--1)\varepsilon_0}{8}  \\
			&\leq d+ \frac{\eta_m + \eta_k}{2}-\frac{\gamma(p_--1)\varepsilon_0}{8}\\
			&\leq d+\frac{\gamma(p_--1)\varepsilon_0}{16} -\frac{\gamma(p_--1)\varepsilon_0}{8} \\
			& = d -\frac{\gamma(p_--1)\varepsilon_0}{16},
		\end{align*}
		which is a contradiction.  Thus $(u_j)$ is Cauchy in $W_0^{1,p}(\Omega)$ as claimed.  Clearly its limit is a minimizer of $G$. The uniqueness follows easily since the minimizing sequence was chosen arbitrarily.
	\end{proof}
	
	\medspace
	In the next section the preceding minimization result is applied to deal with the solvability of boundary value problems.
	
	\section{Applications to partial differential equations}\label{applications}
	The focus of this section is to apply the functional-analytic tools and techniques that have been developed thus far to investigate the solvability of a family of boundary value problems.\\The main result in this section is the following theorem.
	
	\medskip
	
	\begin{theorem}\label{main}
		Let $\Omega\subset {\mathbb R}^n$ be a bounded domain with  smooth boundary $\partial\Omega$, $p\in C(\overline{\Omega})$, $p_->1.$ Then, for any $\varphi \in W^{1,p}(\Omega)$, any $0\leq q\in C(\overline{\Omega})$ and any $f\in \left( W_0^{1,p}(\Omega)\right)^{\ast}$, there exists a unique solution $u\in W^{1,p}(\Omega)$ to the boundary value problem
		\begin{equation}\label{general}
			\begin{cases}
				-\Delta_p(u)+q|u|^{p-2}u=f \,\,\,\text{in}\,\,\, \Omega\\
				u|_{\partial \Omega}= \varphi.
			\end{cases}
		\end{equation}
		
	\end{theorem}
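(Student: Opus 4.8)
The plan is to realize the solution of (\ref{general}) as the critical point of the convex functional supplied by Theorem \ref{UM}. Since $q\in C(\overline{\Omega})$ with $q\geq 0$ it is, in particular, a non-negative measurable function, so Theorem \ref{UM} applies; applying it with $-f$ (which again belongs to $\left(W_0^{1,p}(\Omega)\right)^{\ast}$) in place of $f$ produces a unique $u_0\in W_0^{1,p}(\Omega)$ minimizing
$$G(v)=\rho_\nabla(v-\varphi)+\int\limits_{\Omega}\frac{q}{p}\,|v-\varphi|^p\,dx+f(v).$$
I then put $u:=\varphi-u_0\in W^{1,p}(\Omega)$. Since $u-\varphi=-u_0\in W_0^{1,p}(\Omega)$, the function $u$ satisfies the boundary condition $u|_{\partial\Omega}=\varphi$ in the prescribed sense, so it remains only to verify that $u$ is a weak solution of the equation.

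The central step is to compute the Euler--Lagrange equation of $G$. Because $1<p_-\leq p_+<\infty$, the map $\xi\mapsto|\xi|^{p(x)}$ is of class $C^1$ on ${\mathbb R}^n$ with gradient $p(x)|\xi|^{p(x)-2}\xi$; differentiating $t\mapsto G(u_0+t\psi)$ at $t=0$ under the integral sign — the difference quotients being dominated, via the mean value theorem and $p_+<\infty$, by an $L^1$ function whose integrability follows from the H\"older inequality in $L^{p(\cdot)}(\Omega)$ together with the boundedness of $q$ on $\overline{\Omega}$ — yields, for every $\psi\in W_0^{1,p}(\Omega)$,
$$\big\langle G'(u_0),\psi\big\rangle=\int\limits_{\Omega}|\nabla(u_0-\varphi)|^{p-2}\nabla(u_0-\varphi)\cdot\nabla\psi\,dx+\int\limits_{\Omega}q\,|u_0-\varphi|^{p-2}(u_0-\varphi)\,\psi\,dx+f(\psi).$$
In particular $|\nabla(u_0-\varphi)|^{p-2}\nabla(u_0-\varphi)$ lies in $\big(L^{p'(\cdot)}(\Omega)\big)^n$, so $G'(u_0)$ is a bona fide element of $\left(W_0^{1,p}(\Omega)\right)^{\ast}$. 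Since $u_0$ is a minimizer, $\big\langle G'(u_0),\psi\big\rangle=0$ for all $\psi$; substituting $u_0-\varphi=-u$ and using that $\xi\mapsto|\xi|^{p-2}\xi$ is odd turns this into
$$\int\limits_{\Omega}|\nabla u|^{p-2}\nabla u\cdot\nabla\psi\,dx+\int\limits_{\Omega}q\,|u|^{p-2}u\,\psi\,dx=f(\psi)\qquad\text{for every }\psi\in W_0^{1,p}(\Omega),$$
which is precisely the weak formulation of $-\Delta_p u+q|u|^{p-2}u=f$. Hence $u$ solves (\ref{general}).

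For uniqueness I would note that weak solutions of (\ref{general}) correspond bijectively to critical points of $G$: if $\widetilde u$ is any weak solution, then $\widetilde v:=\varphi-\widetilde u\in W_0^{1,p}(\Omega)$ satisfies $G'(\widetilde v)=0$, and since $G$ is convex — indeed $v\mapsto\rho_\nabla(v-\varphi)$ is strictly convex on $W_0^{1,p}(\Omega)$, by the strict convexity of $\xi\mapsto|\xi|^{p(x)}$ for $p(x)>1$ together with Poincar\'e's inequality — every critical point is the global minimizer, which is unique by Theorem \ref{UM}; thus $\widetilde v=u_0$ and $\widetilde u=u$. Alternatively, subtracting the weak formulations of two solutions and testing with their difference, which lies in $W_0^{1,p}(\Omega)$, reduces matters to the strict monotonicity of $\xi\mapsto|\xi|^{p-2}\xi$ and forces the gradients to agree, whence equality of the solutions follows once more from Poincar\'e's inequality. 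The only genuinely technical point — all of the analytically delicate work having been absorbed into Sections \ref{inequalities}--\ref{dirichletintegral} — is the rigorous justification that $G$ is differentiable on $W_0^{1,p}(\Omega)$ and that its derivative is the $p(x)$-Laplacian together with the lower-order term, acting as a map into $\left(W_0^{1,p}(\Omega)\right)^{\ast}$; this is exactly where the variable-exponent H\"older inequality and the uniform bounds $1<p_-\leq p_+<\infty$ are indispensable.
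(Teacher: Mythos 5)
Your proposal is correct and follows essentially the same route as the paper: existence by minimizing the shifted functional of Theorem \ref{UM} and reading off its Euler--Lagrange equation, and uniqueness via the strict monotonicity of $\xi\mapsto|\xi|^{p-2}\xi$ (your ``alternative'' uniqueness argument is precisely the paper's, which uses the inequalities (\ref{p>2}) and (\ref{p<2}) after testing the difference of the two weak formulations with $u_0-u_1$). You are in fact somewhat more careful than the printed proof on two points: you track the sign of $f$ correctly (applying Theorem \ref{UM} with $-f$ so that the substitution $u=\varphi-v$ yields $+f$ on the right-hand side, where the paper's version as written would produce $-f$), and you at least sketch the justification that $G$ is G\^{a}teaux differentiable with the claimed derivative, which the paper merely asserts.
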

	\begin{proof}
		The proof follows by observing that the differential operator in Problem (\ref{general}) is the Fr\'{e}chet derivative of the functional
		\begin{align}
			&G:W_0^{1,p}(\Omega)\rightarrow [0,\infty)\\ \nonumber
			&G(u)=\int\limits_{\Omega}\frac{1}{p}|\nabla u|^p\,dx+\int\limits_{\Omega}\frac{q}{p}|u|^p\,dx-f(u),
		\end{align}
		introduced in Theorem \ref{UM}. Let $v$ be the unique minimizer of $G$ whose existence and uniqueness follows from Theorem \ref{UM}. Then the function $\varphi-v$ is the sought-for solution of Problem (\ref{general}).\\
		To proceed with uniqueness observe first that for any vectors $u,v\in {\mathbb R}^n$ it holds the identity
		\begin{equation}
			(|u|^{p-2}u-|v|^{p-2}v)(u-v)=\frac{(|u|^{p-2}-|v|^{p-2})(|u|^2-|v|^2)}{2}+\frac{1}{2}(|u|^{p-2}+|v|^{p-2})|u-v|^2,
		\end{equation}
		from which the inequality (here $\gamma(p)=2$ if $2\leq p<3$ and $\gamma(p)=2^{2-p}$ if $p\geq 3$):
		\begin{equation}\label{p>2}
			|u-v|^p\leq \gamma(p)(|u|^{p-2}u-|v|^{p-2}v)(u-v)
		\end{equation}
		is immediate. \\
		Likewise, for $1<p\leq 2$ it holds
		\begin{equation}\label{p<2}
			(p-1)|u-v|^2\left (1+|u|^2+|v|^2\right)^{\frac{p-2}{2}}\leq (|u|^{p-2}u-|v|^{p-2}v)(u-v).
		\end{equation}
		\medskip
		If $u_0\in W^{1,p}(\Omega)$ and $u_1\in W^{1,p}(\Omega)$ are solutions of the Dirichlet problem (\ref{DP}), then by definition, for any $h\in W^{1,p}_0$ one has
		\begin{equation}\label{int}
			I_h=\int\limits_{\Omega}\frac{1}{p}\left(|\nabla u_0|^{p-2}\nabla u_0 -|\nabla u_1|^{p-2}\nabla u_1\right)\nabla h\,dx+\int_{\Omega}\frac{q}{p}\left(|u_0|^{p-2}u_0-|u_1|^{p-2}u_1\right)h=0.
		\end{equation}
		In particular, the preceding inequality holds for $h=u_0-u_1\in W_0^{1,p}(\Omega)$, on account of the boundary condition. Notice that by virtue of inequalities (\ref{p>2}) and (\ref{p<2}), one has
		\begin{equation}
			\int\limits_{\Omega}\frac{q}{p}\left(|u_0|^{p-2}u_0-|u_1|^{p-2}u_1\right)(u_0-u_1)dx\geq 0.
		\end{equation}
		Thus,
		\begin{align}\nonumber
			0=I_{u_0-v_0}&\geq \left(\int\limits_{1<p<2}+\int\limits_{p\geq 2}\right)\frac{1}{p}\left(|\nabla u_0|^{p-2}\nabla u_0 -|\nabla u_1|^{p-2}\nabla u_1\right)(\nabla u_0-\nabla u_1)\,dx\\ \nonumber &\geq  \int\limits_{1<p<2}(1-\frac{1}{p})|\nabla u_0-\nabla u_1|^2\left(1+|\nabla u_0|^2+|\nabla u_1|^2\right)^{\frac{p-2}{2}}\,dx\\ \nonumber &+\int\limits_{p\geq 2}
			\frac{1}{p\gamma(p)}|\nabla u_0 -\nabla u_1|^p\,dx
		\end{align}
		A fortiori, then, $u_0$ and $u_1$ must coincide.
	\end{proof}
	In particular, setting $q=0$ one obtains:
	
	\begin{corollary}\label{EU}
		Under the assumptions of Theorem \ref{main}, there exists a unique solution $u\in W^{1,p}(\Omega)$ to the boundary value problem
		\begin{equation}\label{DP}
			\begin{cases}
				\Delta_p(u)=\text{div}\left(|\nabla u|^{p-2}\nabla u\right)=f \,\,\,\text{in}\,\,\, \Omega\\
				u|_{\partial \Omega}= \varphi.
			\end{cases}
		\end{equation}
	\end{corollary}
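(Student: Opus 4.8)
The plan is to derive this directly from Theorem \ref{main} rather than to argue from scratch, since every hypothesis of the corollary is already a hypothesis of the theorem and the constant function $q \equiv 0$ is an admissible datum there: it is non-negative and lies in $C(\overline{\Omega})$. First I would specialize the general boundary value problem (\ref{general}) to the case $q \equiv 0$, so that the zeroth-order term $q|u|^{p-2}u$ drops out and the left-hand side becomes simply $-\Delta_p(u)$. Theorem \ref{main}, applied with this choice of $q$, then supplies for every $\varphi \in W^{1,p}(\Omega)$ and every $g \in \left(W_0^{1,p}(\Omega)\right)^{\ast}$ a unique $u \in W^{1,p}(\Omega)$ solving $-\Delta_p(u) = g$ in $\Omega$ with $u|_{\partial\Omega} = \varphi$.

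The only discrepancy with the statement to be proved is the sign in front of the operator: the corollary writes the equation as $\Delta_p(u) = f$, whereas the theorem produces $-\Delta_p(u) = g$. This is harmless. Since $\left(W_0^{1,p}(\Omega)\right)^{\ast}$ is a vector space, $g := -f$ belongs to it whenever $f$ does, and applying the previous paragraph to this $g$ yields $u \in W^{1,p}(\Omega)$ with $\Delta_p(u) = -g = f$ and $u|_{\partial\Omega} = \varphi$, which is exactly problem (\ref{DP}). The uniqueness assertion transfers verbatim, because the uniqueness part of Theorem \ref{main} nowhere uses positivity of $q$.

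I do not expect any genuine obstacle: the content of the corollary is already contained in Theorem \ref{main}, and the proof is just bookkeeping with the parameter $q$ together with one sign change. Should a self-contained argument be preferred, one could instead rerun the proof of Theorem \ref{main} with $G(u) = \int_{\Omega}\frac{1}{p}|\nabla u|^p\,dx - g(u)$, whose Fr\'{e}chet derivative is $-\Delta_p$ and whose unique minimizer $v$ --- furnished by Theorem \ref{UM} with $q \equiv 0$ --- makes $\varphi - v$ the desired solution; but invoking Theorem \ref{main} is plainly the shortest route.
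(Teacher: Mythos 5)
Your proposal is correct and matches the paper's own derivation, which obtains the corollary from Theorem \ref{main} simply by setting $q=0$. Your extra care in replacing $f$ by $-f$ to reconcile the sign of the operator is a detail the paper leaves implicit, but it is the same argument.
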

	
	Setting $\varphi=0$ in Corollary \ref{EU}, one obtains Theorem 4.2 in \cite{FZ} for $f(x,u)=f(u).$

	\section*{Acknowledgements}
	The first author received support, from Khalifa University, UAE, through grant number 8474000357. The second author expresses his gratitude to the Department of Mathematical Sciences at Khalifa University for their kind hospitality throughout the project's implementation.\\
	
	\section*{Declaration of generative AI and AI-assisted technologies in the writing process}
	
	During the preparation of this work the authors used CHAT GPT in order to enhance the quality and fluency of the English writing style. After using this tool the authors reviewed and edited the content as needed and take full responsibility for the content of the publication.

\end{document}